\newcommand{\R}{\mathbb{R}}
\newcommand{\s}{\mathbb{S}}
\newtheorem{thm}{Theorem}
\newtheorem{cor}{Corollary}
\newtheorem{prop}{Proposition}
\newtheorem{lem}{Lemma}
\newcommand*\xbar[1]{%
  \hbox{%
    \vbox{%
      \hrule height 0.5pt 
      \kern0.5ex
      \hbox{%
        \kern-0.1em
        \ensuremath{#1}%
        \kern-0.1em
      }%
    }%
  }%
}
\title{The Nonexistence of Noncompact Type-I Ancient 3-d $\kappa$-Solutions of Ricci Flow with Positive Curvature}
\author{Max Hallgren}
\date{}
\begin{document}
\maketitle

\noindent \textbf{Abstract}. In this short paper, we show there do not exist three-dimensional noncompact $\kappa$-solutions of Ricci flow that have positive curvature and satisfy a Type-I bound.  This represents progress towards the proof of Perelman's conjecture that the only complete noncompact three-dimensional $\kappa$-solution with positive curvature is the Bryant soliton.

\section{Introduction}

This paper addresses a class of noncompact solutions to Ricci flow essential to the classification of singularities in three-dimensions.  Namely, we classify three-dimensional complete noncompact $\kappa$-solutions $(M^3, g(t))$, $t \in (-\infty, 0)$, that satisfy the Type-I condition
$$|Rm(g(t))|_{g(t)} \leq \dfrac{C}{|t|} \hspace{6 mm} \mbox{ for all } t \in (-\infty, 0) .$$
Here, a $\kappa$ solution means a complete ancient solution of Ricci flow that is $\kappa$-noncollapsed on all scales and has bounded nonnegative curvature.

\begin{prop}
Let $(M^3, g(t)), \: t \in (-\infty, 0]$ be a complete noncompact 3-dimensional ancient Type-I $\kappa$-solution.  Then $(M^3, g(t))$ is a quotient of the shrinking round cylinder.
\end{prop}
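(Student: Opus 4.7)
The plan is to perform a parabolic blowdown of $(M^3, g(t))$ as $t \to -\infty$ to extract a gradient shrinking Ricci soliton, classify this soliton using known results on 3-dimensional shrinkers, and then use rigidity to conclude. Fix a sequence $t_k \to -\infty$ and consider the parabolic rescalings $g_k(t) := |t_k|^{-1} g(|t_k|t)$, based at points $p_k \in M$ chosen so that $R(g_k(-1))(p_k)$ is bounded below by a positive constant (possible on a nontrivial $\kappa$-solution via Hamilton's Harnack-type inequality, which prevents $\sup_M R(g(t)) \cdot |t|$ from tending to zero). Since the Type-I bound is scale invariant, $|Rm(g_k(t))|_{g_k(t)} \le C/|t|$ uniformly on $(-\infty, 0)$; combined with $\kappa$-noncollapsing, Hamilton's compactness theorem produces a pointed smooth Cheeger-Gromov limit $(M_\infty, g_\infty(t), p_\infty)$, itself a 3-dimensional $\kappa$-solution. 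By Naber's theorem on Type-I ancient Ricci flows, the self-similarity forced by the rescaling identifies $(M_\infty, g_\infty)$ as a gradient shrinking Ricci soliton.

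Next I would apply the classification of 3-dimensional gradient shrinking Ricci solitons with bounded nonnegative curvature (Perelman, Ni--Wallach, Cao--Chen--Zhu, Naber): $(M_\infty, g_\infty)$ must be a quotient of round $S^3$, of flat $\mathbb R^3$, or of the shrinking round cylinder $S^2 \times \mathbb R$. Noncompactness of $M$, together with an appropriate choice of $p_k$, rules out $S^3/\Gamma$, while the positive lower bound on $R(g_\infty(-1))(p_\infty)$ rules out the flat case. Hence the blowdown limit is a quotient of the shrinking round cylinder.

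The main obstacle is the final step: upgrading this asymptotic cylindrical description to a global statement about the original flow. My approach is to exploit the uniformity afforded by the Type-I hypothesis: because Type-I is preserved under parabolic rescaling, the blowdown analysis above applies at \emph{every} basepoint sequence, so $(M, g(t))$ looks uniformly cylindrical at every point as $t \to -\infty$. Passing this infinitesimal splitting down to the original flow via Hamilton's strong maximum principle for 3-dimensional Ricci flow with nonnegative Ricci curvature should force $\mathrm{Ric}(g(t))$ to develop a genuine zero eigendirection at some spacetime point, producing a local isometric splitting $(M, g(t)) \cong N^2 \times \mathbb R$ by Hamilton's splitting theorem. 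Combined with $\kappa$-noncollapsing and the classification of 2-dimensional $\kappa$-solutions, $N^2$ must be the round $S^2$ (up to quotient), so $(M, g(t))$ is a quotient of the shrinking round cylinder as claimed. An alternative endgame would be a backward-uniqueness argument in the style of Kotschwar, showing that an ancient flow is determined by its asymptotic shrinker in a sufficiently strong sense to force equality with the cylinder.
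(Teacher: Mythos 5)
Your opening blowdown steps (parabolic rescaling, noncollapsing, Type-I giving subconvergence to a gradient shrinking soliton, classification to a cylinder quotient) mirror the first part of the paper's argument, which also invokes Perelman's asymptotic shrinker theorem to show that a fixed basepoint becomes an evolving $\epsilon$-neck center for large $|t|$. But the paper treats the endgame very differently, and it is precisely your endgame that contains a real gap.

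The step that fails is the claim that the cylindrical blowdown, via Hamilton's strong maximum principle, ``should force $\mathrm{Ric}(g(t))$ to develop a genuine zero eigendirection at some spacetime point.'' This does not follow. Cheeger--Gromov convergence of the rescaled flows to a cylinder only says that, measured relative to the shrinking curvature scale $|t|^{-1}$, the smallest curvature direction becomes negligible; it produces no exact zero eigenvalue in the original flow. Indeed, after the initial reduction (if $(M,g(t))$ is not a cylinder quotient, the strong maximum principle forces strictly \emph{positive} curvature at all spacetime points), the sectional curvatures are strictly positive everywhere, so Hamilton's splitting theorem has nothing to act on, and no amount of blowdown produces a null direction. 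The Bryant soliton illustrates the phenomenon: its blowdown is cylindrical, yet it never splits (it is not Type-I, so it is not a counterexample to the proposition, but it shows that the implication you invoke is false in general). Likewise the Kotschwar backward-uniqueness alternative does not apply: backward uniqueness requires two flows that coincide on an open set at some time, not merely agree in a blowdown limit.

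There is also a subsidiary issue with applying the blowdown-to-soliton machinery along a \emph{moving} basepoint sequence $p_k$. Perelman's asymptotic shrinker theorem (and the Naber/Enders--M\"uller--Topping variants) require a uniform bound on the reduced length at the basepoints; for fixed $p$ this follows from Type-I, but for moving $p_k$ it is a nontrivial hypothesis. The paper addresses exactly this by tracking the cap region $K(t)$ supplied by the Kleiner--Lott global structure theorem for noncompact $3$d $\kappa$-solutions, and splitting into three cases according to whether $K(t)$ stays at bounded parabolic distance (in which case Naber's reduced-length estimate applies and one gets a cylinder contradiction), whether the curvature in $K(t)$ decays faster than $|t|^{-1}$ (ruled out by a curvature comparison across necks), or whether the cap escapes at superparabolic speed with curvature comparable to $|t|^{-1}$ (ruled out by a topological separation argument using distance distortion estimates and the fact that an evolving $\epsilon$-neck persists backward in time on a controlled interval). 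That case analysis is the substantive content of the proof and has no analogue in your proposal.
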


\noindent This result was communicated to Lei Ni by Perelman, though Perelman did write down the proof.  The proposition was recently proved independently in \cite{yongjia}, but the proof differs from ours in that it uses crucially a result about backwards stability of necks proven in \cite{klott2}.

Roughly speaking, the idea of the proof is as follows.  We use Perelman's results on the global structure of 3d $\kappa$-solutions to show that, at every time, the $\kappa$-solution looks neck-like outside of a compact subset.  We take backwards limits based in this set to establish a contradiction.  If this subset does not move too quickly as $t \to -\infty$, then Perelman's asymptotic shrinker theorem gives convergence to a cylinder.  If the subset moves quickly, we show its curvature must still be comparable to the maximum curvature of the time slice.  Then we are able to show the set cannot move too quickly, making use of the Type-I distortion estimates and the global structure of the solution.\\

The following is an easy consequence of the proof of Proposition 1, and does not depend on the Type-I assumption.

\begin{prop} The scalar curvature at any soul of $(M, g(t))$ is comparable to that of the maximum curvature of the time slice.
\end{prop}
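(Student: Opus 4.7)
The plan is to place any soul inside the compact ``cap'' region provided by Perelman's canonical neighborhood theorem for 3d noncompact $\kappa$-solutions with positive curvature, and then compare the curvature at the soul with the maximum curvature on the cap using the standard gradient estimate on $\kappa$-solutions. The Type-I hypothesis is not used, consistent with the remark preceding the statement.

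Fix $t$, and let $p(t)$ be a soul of $(M, g(t))$. Since $(M, g(t))$ is a noncompact 3-manifold of positive sectional curvature, $M$ is diffeomorphic to $\R^3$ and $p(t)$ is a single point. I would first apply Perelman's canonical neighborhood theorem to produce, for a small universal $\epsilon > 0$, a compact set $K(t) \subset M$ with $\diam_{g(t)} K(t) \leq C(\kappa) R_{\max}(g(t))^{-1/2}$, such that every point of $M \setminus K(t)$ is the center of an $\epsilon$-neck. I claim $p(t) \in K(t)$: if instead $p(t)$ lay in the cylindrical end $M \setminus K(t)$, then any minimizing ray from $p(t)$ would be forced to point outward, since a ray passing through the cap and re-exiting into the cylinder would be strictly non-minimizing; but by a theorem of Cheeger--Gromoll, every geodesic emanating from the soul of a noncompact positively curved manifold is a ray, so the set of ray directions at the soul is the entire unit sphere rather than a hemisphere, contradiction.

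Next I would show that $R_{\max}(g(t))$ is attained, up to a multiplicative constant depending only on $\kappa$, somewhere in $\overline{K(t)}$. This is the content of the observation sketched in the introduction of the paper. On the cylindrical end, the scalar curvature in an $\epsilon$-neck of scale $r$ is within $O(\epsilon)$ of $2r^{-2}$, and by comparing scales of adjacent overlapping necks --- together with a pointed rescaling-limit argument that uses $\kappa$-noncollapsing and boundedness of curvature to rule out neck scales dropping far below those seen on $\partial K(t)$ --- one deduces $\sup_{M \setminus K(t)} R(\cdot, t) \leq C(\kappa) \sup_{\partial K(t)} R(\cdot, t)$. Consequently some $q(t) \in \overline{K(t)}$ satisfies $R(q(t), t) \geq c(\kappa) R_{\max}(g(t))$.

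Finally, with $p(t), q(t) \in \overline{K(t)}$ and $d_{g(t)}(p(t), q(t)) \leq C(\kappa) R_{\max}(g(t))^{-1/2}$, I would integrate Perelman's derivative estimate $|\nabla R(\cdot, t)|_{g(t)} \leq C R(\cdot, t)^{3/2}$ on $\kappa$-solutions along a minimizing geodesic joining them to conclude $R(p(t), t) \geq c(\kappa) R(q(t), t) \geq c'(\kappa) R_{\max}(g(t))$. The principal obstacle is the middle step: bounding the scalar curvature along the cylindrical end by its values on the boundary of the cap requires the genuinely global structural analysis of noncompact 3d $\kappa$-solutions that is carried out in the proof of Proposition 1, rather than the purely local canonical-neighborhood decomposition used in the first step.
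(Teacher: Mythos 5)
Your overall skeleton --- (i) place the soul in the compact cap $K(t)$, (ii) show the maximum curvature of the time slice is essentially attained on $\overline{K(t)}$, (iii) compare curvatures at two points of $\overline{K(t)}$ using the diameter bound --- does match the argument the paper has in mind when it says Proposition 2 is an easy consequence of the proof of Proposition 1 (specifically, the paragraph in Case 2 together with Theorem 1(ii)). However, both of your first two steps diverge from the paper in ways worth flagging, and the second one is a genuine gap.

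For step (i), you appeal to the assertion that every geodesic emanating from the soul of a complete noncompact positively curved manifold is a ray, attributed to Cheeger--Gromoll, to argue the soul cannot sit in the cylindrical region. I am not confident this is a standard theorem in the form you need, and in any case it is not what the paper uses. The paper's argument is purely a separating-sphere one: if the soul $y$ sat in the middle two-thirds of an $\epsilon$-neck, one can find a $10\epsilon$-neck $N'$ disjoint from $y$ whose center sphere $S$ separates $M$ with $y$ in the \emph{unbounded} component of $M\setminus S$; but by \cite[Lem 2.20]{poincare} any such $10\epsilon$-neck disjoint from the soul separates $y$ from the unique end, forcing $y$ into the \emph{bounded} component --- a contradiction. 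If you want to keep your ray argument you would need to supply a reference or a proof; the separating-sphere route is cleaner and is what the paper has in mind.

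For step (ii), you honestly flag that this is ``the principal obstacle'' and gesture at a rescaling-limit argument to rule out neck scales dropping far below those on $\partial K(t)$. That gesture is not a proof; a naive blow-up limit along a sequence of points in the end with unbounded relative curvature ratios would just produce another $\kappa$-solution, and without further input there is no contradiction. The paper closes this gap by citing \cite[Cor 2.21]{poincare}: there is a universal $\bar C(\epsilon)$ such that for two $\epsilon$-necks centered at $z_1, z_2$, both disjoint from the soul $y$, if the center sphere of the $z_1$-neck separates $y$ from $z_2$ then $R(z_2) \le \bar C R(z_1)$. Taking $z_1 \in \partial K(t)$ and $z_2$ an arbitrary point of the end immediately gives $\sup_{M\setminus K(t)} R \le \bar C\, C\, R(\mathrm{soul})$ once you combine with Theorem 1(ii). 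This is a genuinely global structural input about noncompact nonnegatively curved manifolds, not something recoverable from $\kappa$-noncollapsing and compactness alone, so your proof as written has a hole here.

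Step (iii) is fine either way: integrating Perelman's derivative estimate $|\nabla R| \le \eta R^{3/2}$ across the $O(R^{-1/2})$-diameter cap works, and is in fact how Theorem 1(ii) of Kleiner--Lott is proved; the paper simply cites that theorem directly rather than rederiving it.
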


For a moment, consider a 3d noncompact $\kappa$-solution $(M^3, g(t))_{t \in (-\infty, 0]}$ with PCO.  Ding \cite{ding} showed that no such solution can have a Type-I forwards singularity.  Cao-Chow-Zhang \cite{caoben} proved independently that the solution cannot be both Type-I forwards and Type-I ancient.  It is conjectured that the only complete noncompact $\kappa$-solution with positive curvature is the Bryant Soliton, but for now combining the above results gives the following. 

\begin{cor} Suppose $(M^3, g(t))$, $t \in (-\infty, 0]$ is a noncompact 3d $\kappa$-solution with positive curvature.  Then $(M, g(t))$ is Type-II ancient, and has either a forward Type-II singularity, or is eternal.  In particular, after rescaling, $(M, g(t))$ has forward and backward limits equal to the Bryant soliton.
\end{cor}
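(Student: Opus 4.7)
The plan is to exclude Type-I behavior at both ends of the solution using previous results, and then to perform a Hamilton-style blow-up combined with Brendle's rigidity theorem to identify the Bryant limits.

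First I observe that since $(M,g(t))$ has positive sectional curvature, it cannot be a quotient of the round shrinking cylinder $\s^2 \times \R$, as any such quotient has a flat direction. Hence Proposition 1 forces $(M,g(t))$ to be Type-II ancient. Since in dimension three positive sectional curvature is equivalent to positive curvature operator, Ding's theorem applies and rules out a Type-I forward singularity; the solution therefore either develops a Type-II forward singularity at some $T^* > 0$ or extends smoothly for all $t \in (-\infty, +\infty)$. Note that Cao-Chow-Zhang is not actually needed for this step, as Proposition 1 and Ding already handle the two ends independently.

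To obtain the backward Bryant limit, I choose points $(p_k, t_k)$ with $t_k \to -\infty$ such that $Q_k := R(p_k, t_k)$ nearly saturates $\sup \{\,R(p,t) : p \in M,\: t \leq t_k\,\}$ and such that $|t_k|Q_k \to \infty$ (possible because the solution is Type-II ancient). Rescaling $g$ by $Q_k$ and translating time so that $t_k$ becomes $0$, Hamilton's compactness theorem applies: $\kappa$-noncollapsing together with Perelman's bounded-curvature-at-bounded-distance estimate for $3$-d $\kappa$-solutions furnishes the required local geometric control. The limit is an eternal solution of Ricci flow with bounded nonnegative curvature and $R(p_\infty, 0) = 1$. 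By the choice of $(p_k, t_k)$, Hamilton's trace Harnack inequality is saturated on the limit, forcing it to be a steady Ricci soliton; positivity of curvature is inherited via the strong maximum principle, noncompactness is preserved by $\kappa$-noncollapsing at fixed scale, and the limit is nonflat because $R(p_\infty, 0) = 1$. Brendle's classification of noncompact $3$-dimensional $\kappa$-noncollapsed steady solitons with positive curvature then identifies the limit as the Bryant soliton. The same construction, with $t_k \to T^*$ in the Type-II forward case or $t_k \to +\infty$ in the eternal case, yields the forward Bryant limit.

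The main obstacle is the careful selection of the blow-up points. Saturating the Harnack equality on the limit requires near-maximization of $|t|R$ along long parabolic neighborhoods, and I must verify that the rescaled limit is noncompact, nonflat, and positively curved so that Brendle's rigidity theorem applies; this is where the structural results on $\kappa$-solutions, particularly $\kappa$-noncollapsing and bounded curvature at bounded distance, play an essential role.
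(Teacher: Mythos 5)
Your overall route mirrors the paper's: invoke Proposition 1 to rule out Type-I ancient behavior (since a positively curved solution cannot be a cylinder quotient), use Ding to rule out a Type-I forward singularity, and then produce Bryant limits via Hamilton's Type-II rescaling together with a rigidity theorem for noncompact steady solitons. The paper does not write out the blow-up argument -- it offers Corollary 1 as a direct consequence with a one-sentence remark -- so your proposal is essentially a detailed version of the paper's sketch. Your side remark that Cao--Chow--Zhang is logically redundant for the corollary is correct: Proposition 1 excludes Type-I ancient and Ding excludes Type-I forward, so the ``not both'' statement contributes nothing extra.

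There is, however, a genuine gap in the backward blow-up step. You choose $(p_k,t_k)$ so that $Q_k = R(p_k,t_k)$ nearly saturates $\sup\{R(p,t):\ t\le t_k\}$ and $|t_k|Q_k\to\infty$. Since $R$ is pointwise nondecreasing for a $\kappa$-solution, this only gives $R_\infty(\cdot,s)\le 1$ on the limit for $s\le 0$; it gives no upper bound on $R_\infty(\cdot,s)$ for $s>0$, and hence does not show that $\sup_{M_\infty\times\R}R_\infty$ is attained at $(p_\infty,0)$. But Hamilton's eternal-solution theorem, which you invoke to saturate the Harnack inequality and conclude the limit is a steady gradient soliton, requires precisely that the supremum of $R$ over all of spacetime be attained. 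The standard fix is Hamilton's more careful point selection, which nearly maximizes a weighted quantity such as $R(p,t)(t-\alpha_k)(\beta_k-t)$ over expanding slabs $M\times[\alpha_k,\beta_k]$ rather than $R$ alone; this is exactly the subtlety the paper flags when it notes that ``the point-picking method... depends on whether the solution is eternal or suffers a Type-II forward singularity.'' A secondary point worth tightening: ``noncompactness is preserved by $\kappa$-noncollapsing at fixed scale'' is not by itself a reason the limit is noncompact; what you actually need is that the blow-up points lie in the cap region, so Theorem 1 (Perelman's structure theorem) provides a neck at controlled distance from $p_k$ at the curvature scale, whose persistence in the limit forces the limit manifold to be noncompact.
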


Note that, by Hamilton's Type-II rescaling, any noncompact 3d $\kappa$-solution with PCO must have both forward and backwards limits equal to the Bryant soliton, though the point-picking method to achieve the forward limit depends on whether the solution is eternal or suffers a Type-II forward singularity.

The author is thankful to Yongjia Zhang for providing a simplification of Case 3 of the main result.

\section{Preliminaries}

\noindent Throughout, we write $d_t(x, y) := d_{g(t)}(x, y)$ for $x, y \in M$.

Given an ancient solution $(M, g(t))$, $t \in (-\infty, 0]$ of Ricci flow, an evolving $\epsilon$-neck is a subset of the form
$$N := \{ (x, t) \in M \times (-\infty, 0] \: ; \: d_0(x, x_0) < \epsilon^{-\frac{1}{2}} R(x_0, t_0)^{-\frac{1}{2}}, \: t \in (t_0 -\epsilon^{-1}R(x_0, t_0)^{-1}, t_0] \} $$
that is, after rescaling by $R(x_0, t_0)$, $\epsilon$-close in the $C^{|1/\epsilon|}$-topology to the shrinking cylinder $\s^2 \times (-\epsilon^{-1}, \epsilon^{-1}).$  The point $(x_0, t_0)$ is then called the center of $N$.  We denote by $K(\epsilon)$ the set of points which are not the centers of evolving $\epsilon$-necks.

It is essential to our arguments to use the following version of Perelman's result proved by Kleiner and Lott \cite{klott}:

\begin{thm} (Global Structure of noncompact 3d $\kappa$-solutions \cite{klott}) For any $\epsilon >0$, there exists $D = D(\epsilon, \kappa) > 0$ such that if $(M^3, g(t))$, $t \in (-\infty, 0]$ is an noncompact $\kappa$-solution with PCO, then:\\
i. $K(\epsilon)$ is a compact set with 
$$\mbox{diam}_{g(0)}K(\epsilon) < D R(x_0, 0),$$
ii. $D^{-1}R(x, 0) \leq R(x_0, 0) \leq D R(x, 0)$ for any $x \in K(\epsilon)$,\\
where $x_0$ is any soul of $(M, g(0))$.
\end{thm}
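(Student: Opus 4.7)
The plan is to argue by contradiction, using Perelman's compactness theorem for pointed $\kappa$-solutions together with the Cheeger-Gromoll splitting theorem and Hamilton's classification of two-dimensional $\kappa$-solutions. By scaling invariance I normalize a hypothetical sequence of counterexamples $(M_i^3, g_i(t), x_{0,i})$ so that $R(x_{0,i}, 0) = 1$, where $x_{0,i}$ is a soul, and derive a contradiction in each case.

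For part (i), suppose $y_i \in K(\epsilon)_i$ with $d_{g_i(0)}(x_{0,i}, y_i) \to \infty$. The gradient estimate $|\nabla R^{1/2}| \leq C(\kappa)$ on $\kappa$-solutions (a consequence of Hamilton's trace Harnack), combined with a point-selection along the minimizing segment from $x_{0,i}$ to $y_i$, lets me assume that $R(y_i, 0)$ stays in a fixed compact subinterval of $(0, \infty)$. Applying Perelman's compactness theorem to the pointed sequence $(M_i, g_i(\cdot), y_i)$ yields a smooth Cheeger-Gromov subsequential limit $(M_\infty, g_\infty(\cdot), y_\infty)$, which is itself a $\kappa$-solution. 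The escape of $x_{0,i}$ to infinity in one direction from $y_i$, combined with noncompactness of the remaining end in the opposite direction, yields two rays issuing from $y_\infty$; by Toponogov comparison and $\kappa$-noncollapsing, these concatenate into a bi-infinite line through $y_\infty$. The Cheeger-Gromoll splitting theorem then forces $M_\infty = \R \times \Sigma^2$, and the classification of 2d $\kappa$-solutions forces $\Sigma^2$ to be a round shrinking sphere. But then every point of $M_\infty$ is an $\epsilon$-neck center, contradicting $y_i \in K(\epsilon)_i$ for large $i$.

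For part (ii), once (i) is established, $K(\epsilon)_i$ has $g_i(0)$-diameter bounded uniformly in units of $R(x_{0,i}, 0)^{-1/2}$, so the upper bound on $R$ over $K(\epsilon)$ follows directly from $|\nabla R^{1/2}| \leq C(\kappa)$ applied along a segment from the soul. For the matching lower bound, suppose $x_i \in K(\epsilon)_i$ has $R(x_i, 0) \to 0$. The $R$-upper-bound and Perelman compactness produce a smooth subsequential limit $(M_\infty, g_\infty(\cdot), x_{0,\infty})$ in which $x_i \to x_\infty$, $R(x_{0,\infty}, 0) = 1$, and $R(x_\infty, 0) = 0$. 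The strong maximum principle applied to $\partial_t R = \Delta R + 2|\mbox{Ric}|^2$ then forces $R \equiv 0$ on $M_\infty \times (-\infty, 0]$, contradicting $R(x_{0,\infty}, 0) = 1$.

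The main obstacle is the line construction in part (i): one must verify that the two rays from $y_\infty$, produced respectively by the divergent soul and by the remaining noncompact end, are genuinely antipodal rather than merely two rays issuing from the same point. This hinges on the one-endedness of complete noncompact PCO manifolds together with a quantitative Toponogov angle comparison, and is the only step where the full geometric structure of 3d $\kappa$-solutions with PCO is really used. Once the line is available, Cheeger-Gromoll splitting and the 2d classification close out the argument in a standard way.
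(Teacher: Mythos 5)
The paper does not prove this statement: Theorem~1 is quoted verbatim as a result of Kleiner--Lott (it is precisely what the paper refers to as ``the following version of Perelman's result proved by Kleiner and Lott \cite{klott}''), and is used as a black box in the proof of Proposition~1. So there is no in-paper proof to compare your sketch against, and supplying one is not something the paper asks of the reader.

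That said, a few remarks on the sketch itself, since it does attempt a proof. The overall strategy --- compactness of pointed $\kappa$-solutions, extraction of a line, Cheeger--Gromoll splitting, and classification of the $2$-dimensional factor --- is indeed the spirit of the Perelman/Kleiner--Lott argument. But two points are off. First, the scale-invariant gradient estimate for $\kappa$-solutions is on $R^{-1/2}$, not $R^{1/2}$ (from $|\nabla R|\le \eta R^{3/2}$ one gets $|\nabla R^{-1/2}|\le \eta/2$); this reverses which half of part~(ii) ``comes for free.'' From $|\nabla R^{-1/2}|\le C$ together with $d(x,x_0)\le D\,R(x_0,0)^{-1/2}$ you get the \emph{lower} bound $R(x,0)\ge (1+CD)^{-2}R(x_0,0)$, whereas the \emph{upper} bound $R(x,0)\le D\,R(x_0,0)$ is the one that needs the compactness/contradiction argument (smoothness of the limit rules out $R(x_i)\to\infty$; your strong-maximum-principle argument, which you aim at the lower bound, actually handles the case $R(x_i)\to 0$ and so is also in the lower-bound direction). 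Second, in part~(i) the normalization and point-selection are not quite right: fixing $R(x_{0,i},0)=1$ and letting $d(x_{0,i},y_i)\to\infty$ does not by itself put $R(y_i,0)$ in a fixed compact interval, and re-selecting a point along the segment need not keep it inside $K(\epsilon)_i$. The cleaner formulation (as in Kleiner--Lott) is to prove the contrapositive scale-invariant claim --- if $d(y,x_0)^2\,R(y,0)$ is sufficiently large then $y$ is an evolving $\epsilon$-neck center --- normalizing $R(y_i,0)=1$ and using the compactness theorem based at $y_i$; the soul then escapes because the rescaled distance is $\sqrt{d_i^2 R(y_i,0)}\to\infty$, and the two ends of the limit (one toward the escaped soul, one toward the original end of $M_i$) provide the line. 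You correctly flag the antipodality of the two rays as the main technical point; it is handled in Kleiner--Lott and Morgan--Tian via Toponogov comparison.
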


\vspace{6 mm}
Next, we note that the scalar curvature at a fixed point in space cannot decay too quickly as $t \to -\infty$

\begin{lem} For any Type-I ancient $\kappa$-solution $(M^3, g(t))$, $t \in (-\infty, 0]$, and any fixed $p \in M$, there exists $c = c(p) >0$ such that $|t|R(p, t) >c$ for all $t \in (-\infty, -1]$.
\end{lem}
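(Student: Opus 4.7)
The plan is to argue by contradiction. Suppose there is a sequence $t_k \to -\infty$ with $|t_k| R(p, t_k) \to 0$, and rescale parabolically by setting $g_k(t) := |t_k|^{-1} g(|t_k| t)$; this preserves the Type-I bound $|Rm|_{g_k} \leq C/|t|$ and the $\kappa$-noncollapsing, and yields $R_{g_k}(p, -1) = |t_k| R(p, t_k) \to 0$. By Hamilton's compactness theorem for pointed Ricci flows, a subsequence converges smoothly on $(-\infty, 0)$ to a pointed ancient Type-I $\kappa$-solution $(M_\infty, g_\infty(t), p_\infty)$ with $R_{g_\infty}(p_\infty, -1) = 0$.

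I would then show the limit must be flat Euclidean space. Since $\partial_t R - \Delta R = 2|Ric|^2 \geq 0$, the scalar curvature of $g_\infty$ is a nonnegative supersolution of the heat equation, so the strong maximum principle applied at the interior zero $(p_\infty, -1)$ forces $R_{g_\infty} \equiv 0$ on $M_\infty$ for all $t \leq -1$. Combined with $Ric \geq 0$ (which gives $|Ric|^2 \leq R^2 = 0$), this yields $Ric \equiv 0$, hence $g_\infty$ is static and Ricci-flat; in dimension three this means flat, and a complete flat 3-manifold that is $\kappa$-noncollapsed on all scales must be isometric to $(\R^3, g_{Euc})$.

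For the contradiction I would invoke Perelman's asymptotic shrinker theorem applied to $(M, g)$ with basepoint $(p, 0)$: there exist points $q_k \in M$ with reduced length $\ell_{(p, 0)}(q_k, |t_k|) \leq 3/2$ such that the pointed rescaled flows $(M, g_k, q_k)$ subconverge to a nonflat gradient shrinking Ricci soliton, so in particular $R_{g_k}(q_k, -1) \geq c_0 > 0$. The Type-I hypothesis enters crucially to bound $d_{g_k(-1)}(p, q_k) \leq C_0$ uniformly in $k$: one combines the standard $L$-geodesic estimate $|\dot\gamma|^2_{g(-s)} \lesssim \ell/s$ along a minimizing $L$-geodesic from $(p, 0)$ to $(q_k, -|t_k|)$ with the Type-I metric distortion $g(-s) \leq (|t_k|/s)^{2C} g(-|t_k|)$, and then applies Cauchy--Schwarz to control the $g(-|t_k|)$-length of the curve. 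Once the distance is bounded, a subsequence $q_k \to q_\infty \in M_\infty = \R^3$ satisfies $R_{g_\infty}(q_\infty, -1) \geq c_0 > 0$, contradicting flatness of the limit. The main technical obstacle of the proof is precisely this last distance bound: a naive Cauchy--Schwarz argument converges only when the Type-I constant is small, so one must carefully balance the $L$-geodesic bound with the distortion estimate (exploiting the monotonicity $|v|_{g(-s)} \geq |v|_{g(-|t_k|)}$ that comes for free from $Ric \geq 0$) to obtain the bound in full generality.
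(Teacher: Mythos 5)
Your argument is correct, but it takes a genuinely different route from the paper. The paper's proof is a one--line citation: it invokes the Cao--Zhang theorem (reference \cite{cao}) directly, which states that for a Type-I ancient $\kappa$-solution the parabolic blow-down \emph{at a fixed spatial point} already converges to a nonflat shrinking gradient soliton, so $R_{g_k}(p,-1)\to R_{g_\infty}(p_\infty,-1)>0$ immediately contradicts $|t_k|R(p,t_k)\to 0$. Your proposal essentially \emph{reconstructs the relevant part of the Cao--Zhang argument from scratch}: you use Hamilton compactness plus the strong maximum principle to show that if the rescaled scalar curvature at $p$ tends to zero then the blow-down limit is flat $\R^3$, and then you run Perelman's asymptotic shrinker theorem with its (a priori moving) basepoints $q_k$, reducing everything to showing $d_{g_k(-1)}(p,q_k)$ stays bounded. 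That decomposition is valid and makes the proof self-contained, at the cost of being considerably longer than the paper's; what the paper's citation buys is exactly skipping this reconstruction.

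One remark on your last step, which you describe as the ``main technical obstacle.'' You do not actually need any Type-I metric distortion estimate, nor any careful balancing of exponents, to get the distance bound. With $\mathrm{Ric}\ge 0$ the metric is nonincreasing in $t$, so along a minimizing $\mathcal{L}$-geodesic $\gamma$ from $(p,0)$ to $(q_k,-\tau_k)$ one has $|\dot\gamma(s)|_{g(-\tau_k)}\le|\dot\gamma(s)|_{g(-s)}$ for all $s\le\tau_k$, and then Cauchy--Schwarz against the weight $\sqrt{s}$ gives
$$d_{g(-\tau_k)}(p,q_k)\ \le\ \Bigl(\int_0^{\tau_k}\sqrt{s}\,|\dot\gamma|^2_{g(-s)}\,ds\Bigr)^{1/2}\Bigl(\int_0^{\tau_k}s^{-1/2}\,ds\Bigr)^{1/2}\ \le\ \bigl(2\,\mathcal{L}(\gamma)\bigr)^{1/2}\bigl(2\sqrt{\tau_k}\bigr)^{1/2}\ \le\ \sqrt{12}\,\sqrt{\tau_k},$$
using $\mathcal{L}(\gamma)=2\sqrt{\tau_k}\,\ell(q_k,\tau_k)\le 3\sqrt{\tau_k}$ and $R\ge 0$. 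So the bound $d_{g_k(-1)}(p,q_k)\le\sqrt{12}$ is immediate; the Type-I constant never enters, and the scenario where ``naive Cauchy--Schwarz converges only when the Type-I constant is small'' simply does not arise once you use the $\mathrm{Ric}\ge 0$ monotonicity (which, as you note, comes for free for $\kappa$-solutions). Your exposition presents this as a delicate balancing act when it is in fact the straightforward route.
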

\begin{proof} Suppose by way of contradiction that there is a sequence $t_k \to -\infty$ such that $|t_k|R(p, t_k) \to 0$.  We know from \cite{cao} that the sequence $(M, g_k(t), (p, 0))$, where $g_k(t) = |t_k|^{-1}g(|t_k|t)$, converges in the $C^{\infty}$ Cheeger-Gromov sense to a nonflat shrinking soliton $(M_{\infty}^3, g_{\infty}(t), (p_{\infty}, 0))$, $t \in (-\infty, 0]$.  Thus
$$R_k(p, 0) = |t_k|R(p, t_k) \to R(p_{\infty}, 0) = 0,$$
a contradiction.
\end{proof}

\vspace{6 mm}
\noindent Finally, we need distortion estimates to control how fast the cap region can move as $t \to -\infty$.

\begin{lem} For any Type-I ancient $\kappa$-solution $(M^3, g(t))$, $t \in (-\infty, 0]$ satisfying $|t|R(x, t) \leq C$ for all $(x, t) \in M \times (-\infty, 0]$, there exists $C' = C'(C)$ such that for any $x, y \in M$ and $t_1 < t_2 <0$ we have
$$d_{g(t_1)}(x, y) \leq d_{g(t_2)}(x, y) + C'(\sqrt{|t_2|} - \sqrt{|t_1|}) .$$
\end{lem}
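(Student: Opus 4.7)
The plan is to combine the Type-I curvature bound with Hamilton's pointwise distance distortion estimate. Recall that for any Ricci flow, if $Ric(g(t_0)) \leq K$ on $B_{g(t_0)}(x, r_0) \cup B_{g(t_0)}(y, r_0)$, then
$$\frac{d^+}{dt}\bigg|_{t_0} d_{g(t)}(x,y) \geq -C(n)\bigl(K r_0 + r_0^{-1}\bigr)$$
in the sense of forward Dini derivatives. Optimizing in $r_0$ by taking $r_0 = K^{-1/2}$ yields the pointwise bound $\frac{d^+}{dt} d_{g(t)}(x,y) \geq -C(n)\sqrt{K}$.

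In our setting, $g(t)$ has nonnegative sectional curvature, so $|Ric(g(t))| \leq C_1 |Rm(g(t))| \leq C_1 C / |t|$ globally on $M$ by the Type-I hypothesis. Hamilton's estimate therefore applies at each $t$ with $K = K(t) = C_1 C / |t|$ (the ball-radius condition being trivial since curvature is globally bounded), giving
$$\frac{d^+}{dt} d_{g(t)}(x,y) \geq -\frac{C_2}{\sqrt{|t|}}.$$
Since $Ric \geq 0$ also forces $\frac{d}{dt} d_{g(t)}(x,y) \leq 0$, we have two-sided control of the time distortion of distances.

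Integrating over $[t_1, t_2]$, using $\int_{t_1}^{t_2} |t|^{-1/2}\, dt = 2(\sqrt{|t_1|} - \sqrt{|t_2|})$ for $t_1 < t_2 < 0$ together with the local Lipschitz regularity (hence absolute continuity) of $t \mapsto d_{g(t)}(x,y)$, one concludes
$$d_{g(t_2)}(x,y) - d_{g(t_1)}(x,y) \geq -2C_2 \bigl(\sqrt{|t_1|} - \sqrt{|t_2|}\bigr),$$
which rearranges to the claimed inequality with $C' = 2C_2$. (As written the lemma has $\sqrt{|t_2|} - \sqrt{|t_1|}$ on the right, but since $t_1 < t_2 < 0$ and $Ric \geq 0$ already force $d_{g(t_1)}(x,y) \geq d_{g(t_2)}(x,y)$, the intended quantity is $\sqrt{|t_1|} - \sqrt{|t_2|}$.)

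No genuine obstacle is expected: this is a standard application of Perelman's integrated distance distortion technique under a Type-I bound, and the only minor technicality is passing from the Dini-derivative inequality to the integrated form, which is routine.
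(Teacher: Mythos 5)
Your proof is correct and matches the paper's own argument: both invoke the Hamilton--Perelman distance-distortion estimate with $K \sim C/|t|$ (the ball-radius hypothesis being automatic from the global Type-I bound) and integrate $|t|^{-1/2}$ to produce the $\sqrt{|t_1|}-\sqrt{|t_2|}$ factor. You also correctly flag the sign typo in the lemma's statement --- the right-hand side should read $C'(\sqrt{|t_1|}-\sqrt{|t_2|})$, which is exactly what the paper's own integration step produces.
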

\begin{proof}
Fix $x, y \in M$.  Then the global curvature assumption gives
$$\dfrac{\partial}{\partial t} d_{g(t)}(x, y) \geq -4(n-1)\sqrt{C|t|^{-1}} .$$
Integrating from $t_1$ to $t_2$, we get
$$d_{g(t_2)}(x, y) - d_{g(t_1)}(x, y) \geq -8\sqrt{C}(\sqrt{|t_1|} - \sqrt{|t_2|}).$$ 
Moreover, the nonnegative curvature assumption ensures that distances are nonincreasing.
\end{proof}

\section{Proof of the Proposition}

\vspace{4 mm}

\noindent \textit{Proof of Proposition 1} Suppose by way of contradiction that $(M^3, g(t)), t \in (-\infty, 0]$ is not a quotient of the round cylinder.  Then $(M^3, g(t))$ has positive sectional curvature everywhere, so is diffeomorphic to $\R^3$.  Fix $\epsilon > 0$ sufficiently small.  Then by Theorem 1, for each $t \in (-\infty, 0]$ there is a compact subset $K(t) \subseteq M$ such that $M \setminus K(t)$ is the set of points which are the center of evolving $\epsilon$-necks.  Theorem 1 also states that $\mbox{diam}_{g(t)}K(t) \leq DR(y, t)^{-\frac{1}{2}}$ for all $t \in (-\infty, 0)$, $y \in K(t)$.

Fix $p \in M$. For large enough $|t|$, the spacetime point $(p, t)$ is the center of an evolving $\epsilon$-neck: otherwise there is a sequence $t_j \to -\infty$ where $(p, t_j)$ is not the center of an evolving $\epsilon$-neck.  By the Type-I assumption, and applying the definition of reduced length to the path constant in space, we see that the spacetime sequence $(p, t_j)$ has uniformly bounded reduced length with respect to $(p, 0)$.  Thus Perelman's asymptotic shrinker theorem gives that $(M, |t_j|^{-1}g(|t_j|t), (p, -1)), t \in (-\infty, 0)$ converges in the $C^{\infty}$ Cheeger-Gromov sense to a noncompact, nonflat gradient Ricci soliton with bounded curvature.  However, such a soliton is a shrinking cylinder by Perelman's classification, leading to a contradiction.\\

\noindent \textbf{Case 1:} There exists a sequence $(x_k, t_k)$ in $M \times (-\infty, 0]$ with $x_k \in K(t_k)$, $t_k \to -\infty$, and 
$$\liminf_{k \to \infty} \dfrac{d_{g(t_k)}(x_k, p)}{\sqrt{\tau_k}} < \infty .$$
In this case, we have the result of Naber \cite{naber} that
$$l_{(p, 0)}(x_k, \tau_k) \leq A \left( 1 + \dfrac{d_{g(t_k)}(x_k, p)}{\sqrt{\tau_k}} \right)^2$$
for some $A< \infty$ universal.  Thus we have a bound on $l_{(p, 0)}(x_k, \tau_k)$ independent of $k$, so we can apply Perelman's asymptotic shrinker theorem \cite{poincare} to get subconvergence of $(M, |t_k|g(|t_k|t), (x_k, -1))_{t \in (-\infty, -1]}$ to a cylinder, a contradiction.  \\

\noindent \textbf{Case 2:} There exists a sequence $(x_k, t_k)$ in $M \times (-\infty, 0]$ such that $t_k \to -\infty$, $x_k \in K(t_k)$, and
$$\lim_{k \to \infty} |t_k|R(x_k, t_k)  = 0 .$$
In this case, using that $R(p, t_k) \geq c|t_k|$, we get
$$\lim_{k \to \infty} \dfrac{R(x_k, t_k)}{R(p, t_k)} = 0 .$$
Also, note that any soul $y$ of $(M, g(t_k))$ must lie in $K(t_k)$ as long as $\epsilon$ was chosen smaller than some universal constant.  In fact, we will show that $y$ must lie outside the middle two-thirds of any $\epsilon$-neck $N \subseteq (M, g(t))$.  

Suppose by way of contradiction that $y$ is in the middle two-thirds of $N$.  Then there is an open subset $N'$ that is a $10\epsilon$-neck that is disjoint from $y$, whose center sphere separates $M$, and is such that $y$ lies in the unbounded part of $M \setminus S$.  However, by \cite[Lem 2.20]{poincare}, this $10 \epsilon$-neck separates $y$ from the unique end of $M$, contradicting the fact that it is contained in the unbounded component of $M \setminus S$.

It is a fact from the theory \cite[Cor 2.21]{poincare} of noncompact manifolds of positive curvature that there exists $\bar{C} = \bar{C}(\epsilon) < \infty$ universal with the following property: for any $\epsilon$-neck centered at $z_1$ whose center sphere separates $y$ from another $\epsilon$-neck centered at $z_2$, we have $R(z_2, t_k) \leq \bar{C} R(z_1, t_k)$.  Note that, in the statement of $\cite[Cor 2.21]{poincare}$, it is required that the $\epsilon$-necks are disjoint from $y$, but since $y$ lies outside of the middle two-thirds of any $\epsilon$-neck, $y$ is disjoint from the $2\epsilon$-necks centered at $z_1$ and $z_2$, so we may apply the theorem by replacing $\epsilon$ with $2\epsilon$.  We apply this with $z_2 = p$, and with $z_1$ any point of $\partial K(t_k)$, so that $R(z_1, t_k) \leq C R(x_k, t_k)$.  Combining this with Theorem 1 gives
$$R(p, t_k) \leq \bar{C} C R(x_k, t_k) $$
for all $k$, contradicting the above inequality. \\

\noindent \textbf{Case 3:} $$\liminf_{t \to -\infty} \dfrac{d_t(p, K(t))}{\sqrt{|t|}} = \infty , \hspace{6 mm} \liminf_{t \to -\infty}\inf_{x \in K(t)} |t|R(x, t)  \geq b > 0 .$$
In this case we also have
$$\liminf_{t \to -\infty} \inf_{x \in K(t)} R(x, t)d_t^2(x, p) = \infty.$$
Fix $t_0 < 0$ such that $p$ is the center of an evolving $\epsilon$-neck based at $(p, t)$ and such that $|t|R(x, t) \geq \frac{1}{2}b$ for all $t \leq t_0$, $x \in K(t)$.  Then for $t \leq t_0$ we have
$$\mbox{diam}_{g(t)}(K(t)) \leq D (2|t|/b)^{\frac{1}{2}}.$$
Recall the distortion constant $C' = C'(C)<\infty$ from Lemma 3, the constant $c = c(p) >0$ from Lemma 2, and the Elliptic-type constant $A< \infty$ from Lemma 1.  
By assumption, we may choose $t_1 \leq t_0$ such that
$$C^* := \dfrac{d_{t_1}(p, K(t_1))}{\sqrt{|t_1|}} \geq 100,000((D+ A + 1)b^{-\frac{1}{2}} + C') \hspace{6 mm} \mbox{ and }$$
$$\dfrac{d_{25t_1}(p, K(25|t_1|))}{\sqrt{25|t_1|}} \geq C^* .$$
Set $t_2 := 25t_1$, and let $x_i \in K(t_i)$ be a soul of $(M, g(t_i))$.  Then
$$d_{t_2}(x_2, p) \geq 5C^* \sqrt{|t_1|}, $$ 
$$d_{t_2}(x_2, x_1) \geq 5C^*\sqrt{|t_1|} - (C^* \sqrt{|t_1|} + 5C'\sqrt{|t_1|} + 2Db^{-\frac{1}{2}}|t_1|^{\frac{1}{2}}) \geq (3.99)C^*\sqrt{|t_1|} .$$

We claim that the center sphere $S(p)$ of the $\epsilon$-neck centered at $p$ separates $x_2$ from $x_1$.  In fact, since $x_1$ lies outside the center two-thirds of the $\epsilon$-neck $N$ centered at $p$, \cite[Lem A.9]{poincare} implies that $S:= \partial B(x_1, R(p, t_1)^{-\frac{1}{2}} + d(x_1, p)) \cap N$ lies on the unbounded component of the complement of the center sphere $S(p)$ of $N$.  Note that $R(p, t_1)^{-\frac{1}{2}} \leq c^{-\frac{1}{2}}|t_1|^{\frac{1}{2}}$.  Since level sets for $d(x_1, \cdot)$ are connected \cite[p.27]{poincare}, and $S$ is in the middle two-thirds of an $\epsilon$-neck, we must have $S = \partial B(x_1, R(p, t_1)^{-\frac{1}{2}} + d(x_1, p))$.  Because
$$d_{t_1}(x_1, x_2) \geq (4.99)C^* |t_1|^{\frac{1}{2}} \geq c^{-\frac{1}{2}}|t_1|^{\frac{1}{2}} + (C^* + Db^{-\frac{1}{2}})|t_1|^{\frac{1}{2}} + 3C^*|t_1|^{\frac{1}{2}} > R(p, t_1)^{-\frac{1}{2}} + d(x_1, p) + 3C^* |t_1|^{\frac{1}{2}},$$
so any minimal $g(t_1)$-geodesic from $x_1$ to $x_2$ must leave $B(x_1, d(x_1, p) + R(p, t_1)^{-\frac{1}{2}})$, hence it must leave the bounded component of $M \setminus S(p)$.  In particular, $x_2$ is in the unbounded component of $M \setminus S(p)$.

We can take $\epsilon$ small enough so that $\epsilon^{-1} C^{-1} > 25$, so that
$$\epsilon^{-1}R(y, t_1)^{-1} \geq \epsilon^{-1}C^{-1}|t_1| > 25|t_1| .$$
This means that $N \times \{t_2\}$ is a time slice of an evolving $\epsilon$-neck defined on a time interval containing $[t_2, t_1]$.  In particular, $S(p)$ separates $x_2$ from the unique end of $M$ (again by \cite[Lem 2.20]{poincare}), so $x_2$ is in the bounded component of $M \setminus S(p)$, a contradiction.

\flushright $\qed$

\flushleft

\end{document}